\newtheorem{theorem}{Theorem}[section]
\newtheorem{lemma}[theorem]{Lemma}
\newtheorem{corollary}[theorem]{Corollary}
\theoremstyle{definition}
\theoremstyle{remark}
\numberwithin{equation}{section}
\begin{document}

\title[Extension of the $H^{k}$ mean curvature flow]{On an extension of the $H^{k}$ mean curvature flow of closed convex
hypersurfaces}

\author{Yi Li}
\address{Department of Mathematics,
Shanghai Jiao Tong University, 800 Dongchuan Road, Min Hang District, 
Shanghai, 200240 China}

\email{yilicms@gmail.com}

%%%%%%%%%%%%%%%%%%%%%%%%%%%%%%%%%%%%%%%%%%%%%%%%%%%%%%%%%%%%%%%%%%%%%%%%%%%%%%

\subjclass[2000]{Primary 53C45, 35K55}

%\date{Received by the editors August 29, 2010 and accepted in January 30, 2011}
\keywords{$H^{k}$ mean curvature flow, closed convex hypersurfaces, 
singularity time}

\begin{abstract} In this paper we prove that the $H^{k}$ ($k$ is odd and larger
than $2$) mean curvature flow of
a closed convex hypersurface can be extended over the maximal time provided that the
total $L^{p}$ integral of the mean curvature is finite for some $p$.
\end{abstract}

\maketitle

%%%%%%%%%%%%%%%%%%%%%%%%%%%%%%%%%%%%%%%%%%%%%%%%%%%%%%%%%%%%%%%%%%%%%%%%%%%%%%
\section{Introduction}
%%%%%%%%%%%%%%%%%%%%%%%%%%%%%%%%%%%%%%%%%%%%%%%%%%%%%%%%%%%%%%%%%%%%%%%%%%%%%%

Let $M$ be a compact $n$-dimensional hypersurface without boundary, which is smoothly embedded into the $(n+1)$-dimensional Euclidean space $\mathbb{R}^{n+1}$ by the map
\begin{equation}
F_{0}: M\longrightarrow \mathbb{R}^{n+1}.\label{1.1}
\end{equation}
The $H^{k}$ mean curvature flow, an evolution equation of the mean curvature
$H(\cdot,t)$, is a smooth family of immersions $F(\cdot,t): M\to\mathbb{R}^{n+1}$ given by
\begin{equation}
\frac{\partial}{\partial t}F(\cdot,t)=-H^{k}(\cdot,t)\nu(\cdot,t), \ \ \ F(\cdot,0)
=F_{0}(\cdot),\label{1.2}
\end{equation}
where $k$ is a positive integer and $\nu(\cdot,t)$ denotes the outer unit
normal on $M_{t}:=F(M,t)$ at $F(\cdot,t)$.

When $k=1$ the equation (\ref{1.2})
is the usual mean curvature flow. Huisken \cite{Huisken} proved that the mean curvature
flow develops to singularities in finite time: Suppose that $T_{{\rm
max}}<\infty$ is the first singularity time for the mean curvature
flow. Then $\sup_{M_{t}}|A|(t)\to\infty$ as $t\to T_{{\rm max}}$. Recently,
Le-Sesum \cite{LS} and
Xu-Ye-Zhao \cite{XYZ} independently proved an extension theorem on the mean
curvature flow under some curvature conditions. A natural question
is whether we can extend general $H^{k}$ mean curvature flow
over the maximal time interval.

The short time existence of the $H^{k}$ mean curvature flow has been
established in \cite{S}, i.e., there is a maximal time interval
$[0,T_{\max})$, $T_{\max}<\infty$, on which the flow exists. In \cite{L}, we proved an extension theorem on the $H^{k}$ mean curvature flow under some curvature condition; that is,
the condition (b) in Theorem \ref{t1.1} holds and the second fundamental
form has a lower bound along the flow. In
this paper, we give another extension theorem of the
$H^{k}$ mean curvature flow for convex hypersurfaces.

\begin{theorem} \label{t1.1}Suppose that the integers $n$ and $k$ are greater than or equal
to $2$, $k$ is odd, and $n+1\geq k$. Suppose that $M$ is a compact $n$-dimensional
hypersurface without boundary, smoothly embedded into $\mathbb{R}^{n+1}$ by a smooth function $F_{0}$. Consider the $H^{k}$ mean curvature flow on $M$,
\begin{equation*}
\frac{\partial}{\partial t}F(\cdot, t)=-H^{k}(\cdot, t)\nu(\cdot,t), \ \ \ F(\cdot,0)=F_{0}(\cdot).
\end{equation*}
If
\begin{itemize}

\item[(a)] $H(\cdot)>0$ on $M$,

\item[(b)] for some $\alpha\geq n+k+1$,
\begin{equation*}
||H(\cdot,t)||_{L^{\alpha}(M\times[0,T_{\max}))}:=\left(\int^{T_{\max}}_{0}
\int_{M_{t}}|H(\cdot,t)|^{\alpha}_{g(t)}d\mu(t)dt\right)^{\frac{1}{\alpha}}<\infty,
\end{equation*}

\end{itemize}
then the flow can be extended over the time $T_{\max}$. Here $d\mu(t)$ denotes
the induced metric on $M_{t}$.
\end{theorem}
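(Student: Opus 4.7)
The plan is a Moser-iteration argument on the evolution of $H$ along the $H^{k}$ flow. In outline: bootstrap hypothesis \textbf{(b)} up to a uniform $L^{\infty}$ bound on $H$, convert this to an $L^{\infty}$ bound on $|A|$ via the pinching $|A|\le H$, and then extend past $T_{\max}$ by Shi-type derivative estimates together with the short-time existence of \cite{S}. The threshold $\alpha\ge n+k+1$ is forced by the parabolic scaling of the flow: under $(F,t)\mapsto(\lambda F,\lambda^{k+1}t)$, the integral $\int H^{\alpha}\,d\mu\,dt$ scales as $\lambda^{n+k+1-\alpha}$, so $n+k+1$ is the scale-invariant threshold.

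From $\partial_{t}F=-H^{k}\nu$ one derives
\begin{equation*}
\partial_{t}H=\Delta H^{k}+H^{k}|A|^{2},\qquad \partial_{t}\,d\mu=-H^{k+1}\,d\mu.
\end{equation*}
Hypothesis \textbf{(a)} is preserved: at a spatial minimum of $H$ one has $\nabla H=0$, so $\Delta H^{k}=kH^{k-1}\Delta H\ge0$ (using $H>0$), and $H^{k}|A|^{2}\ge0$; the parabolic maximum principle then gives $\min_{M_{t}}H\ge\min_{M_{0}}H>0$ on $[0,T_{\max})$. The key auxiliary input is the pointwise pinching $|A|^{2}\le H^{2}$, supplied by the paper's standing framework of closed convex hypersurfaces: convexity gives $\lambda_{i}\ge0$, whence $|A|^{2}=\sum\lambda_{i}^{2}\le(\sum\lambda_{i})^{2}=H^{2}$ at $t=0$, and a tensor maximum principle of Huisken type shows this pinching is preserved by the $H^{k}$ flow.

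For the Moser iteration, fix $q\gg1$ and compute
\begin{equation*}
\frac{d}{dt}\int H^{q}\,d\mu=-kq(q-1)\int H^{q+k-3}|\nabla H|^{2}\,d\mu+q\int H^{q+k-1}|A|^{2}\,d\mu-\int H^{q+k+1}\,d\mu.
\end{equation*}
Use $|A|^{2}\le H^{2}$ to bound the middle term by $q\int H^{q+k+1}\,d\mu$, and set $v=H^{(q+k-1)/2}$, so that the gradient integral becomes $\tfrac{4kq(q-1)}{(q+k-1)^{2}}\int|\nabla v|^{2}\,d\mu$. The Michael--Simon Sobolev inequality
\begin{equation*}
\Bigl(\int v^{\tfrac{2n}{n-2}}\,d\mu\Bigr)^{\tfrac{n-2}{n}}\le C\int\bigl(|\nabla v|^{2}+H^{2}v^{2}\bigr)\,d\mu
\end{equation*}
then closes the iteration: the $H^{2}v^{2}$ boundary term is absorbed by H\"older's inequality against hypothesis \textbf{(b)}, which is exactly where $\alpha\ge n+k+1$ enters---the critical scaling gives the correct homogeneity for absorption. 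Integrating in time and bootstrapping $q\to\infty$ yields $\|H\|_{L^{\infty}(M\times[0,T_{\max}))}<\infty$.

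With $H\le C$ uniform, the pinching gives $|A|\le H\le C$; Shi-type interior estimates propagate this to uniform bounds on $|\nabla^{m}A|$ for all $m\ge0$, the $M_{t}$ converge smoothly to a limit $M_{T_{\max}}$, and the short-time existence of \cite{S} restarts the flow past $T_{\max}$. The main obstacle is the Moser step: managing the exponents so that the Sobolev iteration converges exactly at the threshold $\alpha\ge n+k+1$, and absorbing the Michael--Simon boundary term via H\"older interpolation against the $L^{\alpha}(H)$ hypothesis. A separate essential ingredient is the preservation of the pinching $|A|\le H$: since hypothesis \textbf{(a)} alone does not furnish this, it is the closed-convex-hypersurface framework of the paper that makes the argument work, both by supplying the pinching initially and by ensuring its propagation along the $H^{k}$ flow.
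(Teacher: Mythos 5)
Your overall architecture matches the paper's: show $H$ stays bounded using the $L^{\alpha}$ hypothesis via a Moser-type iteration with the Michael--Simon Sobolev inequality (this is precisely the part the paper delegates to \cite{L} and \cite{XYZ}), convert the bound on $H$ into a bound on $|A|$, and then extend by uniform parabolicity and higher-order estimates. The Moser step, the scaling heuristic identifying $\alpha=n+k+1$ as the critical exponent, and the preservation of $H>0$ are all sound in outline.

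The genuine gap is in the middle step, and it sits exactly where the paper's new content lies. You convert $H\le C$ into $|A|\le C$ via the pinching $|A|^{2}\le H^{2}$, obtained from convexity of the initial hypersurface plus an asserted ``tensor maximum principle of Huisken type.'' There are two problems. First, Theorem \ref{t1.1} does not assume convexity: hypothesis (a) is only $H>0$ (mean convexity), so $\lambda_{i}\ge 0$ is not available at $t=0$ and your argument proves at best a strictly weaker statement. Second, even granting convex initial data, the preservation of $|A|^{2}/H^{2}\le 1$ under the $H^{k}$ flow is not a routine scalar maximum-principle computation: by the paper's Lemma \ref{l2.3} with $\ell=1$, the evolution of $|A|^{2}/H^{2}$ contains the reaction terms $\frac{2(k-1)}{H^{3-k}}|A|^{4}$ and $\frac{2k(k-1)}{H^{4-k}}\left|\nabla H\right|^{2}$, both with the wrong (positive) sign, so the maximum principle does not close; one would instead have to invoke preservation of full convexity for power-of-mean-curvature flows, a substantial external input that the paper neither states nor needs. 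The paper's actual device --- its main technical contribution --- is the weighted quantity $Q=|A|^{2}/H^{2k}+C_{0}H^{-k-1}$: the exponent $2k$ kills the $|A|^{4}$ term (Corollary \ref{c2.4}), and the correction $C_{0}H^{\ell+1}$ with $\ell=-2-k$ and $C_{0}$ as in (\ref{3.5}) absorbs the bad $\left|\nabla H\right|^{2}$ term, yielding (\ref{3.7}) and hence, by the maximum principle, that $|A|$ is controlled whenever $H$ is (Lemma \ref{l3.1}) --- under the sole hypothesis $H>0$. Your proposal would be repaired by replacing the pinching step with this computation, or with an equivalent weighted quantity that closes under the scalar maximum principle without convexity.
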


If the second fundamental form has a lower bound, i.e., $h_{ij}(t)\geq C g_{ij}(t)$,
then $H(t)\geq nC>0$ which satisfies condition (a). Therefore the above
theorem is a weak version of that in \cite{L}.

%%%%%%%%%%%%%%%%%%%%%%%%%%%%%%%%%%%%%%%%%%%%%%%%%%%%%%%%%%%%%%%%%%%%%%%%%%%%%%
\section{Evolution equations for the $H^{k}$ mean curvature flow}
%%%%%%%%%%%%%%%%%%%%%%%%%%%%%%%%%%%%%%%%%%%%%%%%%%%%%%%%%%%%%%%%%%%%%%%%%%%%%%

Let $g=\{g_{ij}\}$ be the induced metric on $M$ obtained by the pullback of
the standard metric $g_{{\mathbb{R}^{n+1}}}$ of $\mathbb{R}^{n+1}$. We denote by $A=\{h_{ij}\}$ the second fundamental form and $d\mu=\sqrt{{\rm det}(g_{ij})}dx^{1}\wedge\cdots
\wedge dx^{n}$ the volume form on $M$, respectively, where $x^{1},\cdots,x^{n}$
are local coordinates. The mean curvature can be expressed as
\begin{equation}
H=g^{ij}h_{ij}, \ \ \ g_{ij}=\left\langle\frac{\partial F}{\partial x^{i}},
\frac{\partial F}{\partial x^{j}}\right\rangle_{g_{\mathbb{R}^{n+1}}};\label{2.1}
\end{equation}
meanwhile the second fundamental forms are given by
\begin{equation}
h_{ij}=-\left\langle\nu,\frac{\partial^{2}F}{\partial x^{i}\partial x^{j}}\right\rangle_{g_{\mathbb{R}^{n+1}}}.\label{2.2}
\end{equation}

We write $g(t)=\{g_{ij}(t)\}, A(t)=\{h_{ij}(t)\}, \nu(t), H(t), d\mu(t),
\nabla_{t}$, and $\Delta_{t}$ the corresponding induced metric, second fundamental form, outer unit normal vector, mean curvature, volume form, induced Levi-Civita connection, and induced Laplacian operator at time $t$. The position coordinates are not explicitly written in
the above symbols if there is no confusion.

The following evolution equations are obvious.

\begin{lemma} \label{l2.1}For the $H^{k}$ mean curvature flow, we have
\begin{eqnarray*}
\frac{\partial}{\partial t}H(t)&=&kH^{k-1}(t)\Delta_{t}H(t)+H^{k}(t)
|A(t)|^{2}
+k(k-1)H^{k-2}(t)\left|\nabla_{t}H(t)\right|^{2},\\
\frac{\partial}{\partial t}|A(t)|^{2}&=&kH^{k-1}(t)\Delta_{t}|A(t)|^{2}-2kH^{k-1}(t)\left|\nabla_{t}A(t)
\right|^{2}+2kH^{k-1}(t)|A(t)|^{4}\\
&&+ \ 2k(k-1)H^{k-2}(t)|\nabla_{t} H(t)|^{2}.
\end{eqnarray*}
Here and henceforth, the norm $|\cdot|$ is respect to the induced metric $g(t)$.
\end{lemma}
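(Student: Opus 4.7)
The lemma collects two routine identities, so the plan is to specialize the general evolution equations for hypersurfaces moving with a prescribed normal velocity to the case $F_{t} = -H^{k}\nu$, and then apply the chain rule together with Simons' identity to rewrite everything in terms of $H$, $|A|^{2}$ and their covariant derivatives.

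I would begin from the standard formulas, valid for any flow $\partial_{t} F = -f\,\nu$. Differentiating $g_{ij} = \langle F_{i}, F_{j}\rangle$ and $h_{ij} = -\langle \nu, F_{ij}\rangle$ and using the Gauss--Weingarten relations gives
\[
\partial_{t} g_{ij} = -2f\,h_{ij},\qquad \partial_{t} g^{ij} = 2f\,h^{ij},
\]
\[
\partial_{t} h_{ij} = \nabla_{i}\nabla_{j} f - f\,h_{i}^{\ k}h_{kj},\qquad \partial_{t} H = \Delta f + f\,|A|^{2}.
\]
With $f=H^{k}$ the chain rule yields $\nabla_{i} f = kH^{k-1}\nabla_{i} H$ and $\Delta f = kH^{k-1}\Delta H + k(k-1)H^{k-2}|\nabla H|^{2}$, and substituting into the equation for $\partial_{t} H$ produces the first evolution equation of the lemma on the nose.

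For the equation for $|A|^{2} = g^{ik}g^{jl}h_{ij}h_{kl}$, I would apply the product rule using the formulas above, which, after absorbing the cubic-in-$A$ contributions from $\partial_{t} g^{ik}$ and $\partial_{t} g^{jl}$, produces an expression of the schematic form
\[
\partial_{t} |A|^{2} \;=\; 2h^{ij}\,\nabla_{i}\nabla_{j} f + 2f\,\mathrm{tr}(A^{3}).
\]
Next, I expand $\nabla_{i}\nabla_{j}(H^{k})$ via the chain rule and invoke Simons' identity
\[
\nabla_{i}\nabla_{j} H \;=\; \Delta h_{ij} + |A|^{2} h_{ij} - H\,h_{im}h^{m}_{\ j}
\]
to convert $\nabla^{2} H$ into $\Delta h_{ij}$ plus curvature terms. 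Finally, the tensor Bochner identity $\Delta |A|^{2} = 2h^{ij}\Delta h_{ij} + 2|\nabla A|^{2}$ turns the diffusion contribution $2kH^{k-1}h^{ij}\Delta h_{ij}$ into $kH^{k-1}\Delta|A|^{2} - 2kH^{k-1}|\nabla A|^{2}$, assembling the stated form.

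The main piece of bookkeeping is that cubic curvature contributions of the schematic type $H^{k}\,\mathrm{tr}(A^{3})$ arise from three distinct sources---the two $\partial_{t} g^{ij}$ contractions, the $-f\,h_{i}^{\ k}h_{kj}$ piece of $\partial_{t} h_{ij}$, and the $-H\,h_{im}h^{m}_{\ j}$ piece of Simons' identity---and these must be tracked carefully so that the only quartic term surviving is $2kH^{k-1}|A|^{4}$ and the quadratic-gradient terms combine into $2k(k-1)H^{k-2}|\nabla H|^{2}$. Nothing here is deep, which is why the author calls the equations ``obvious''; the only care needed is with signs and index conventions, and the specialization $k=1$ provides a consistency check against the classical Huisken formulas for mean curvature flow.
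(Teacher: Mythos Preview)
The paper gives no proof of this lemma beyond the remark that the equations are ``obvious'', so there is nothing to compare at the level of method; your approach---specializing the general normal-velocity evolution formulas to $f=H^{k}$, applying the chain rule, and then invoking Simons' identity together with $\Delta|A|^{2}=2h^{ij}\Delta h_{ij}+2|\nabla A|^{2}$---is exactly the standard derivation and is clearly what the author has in mind. Your treatment of the $H$-equation is correct as written.

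One caution on the bookkeeping for $\partial_{t}|A|^{2}$: if you actually carry through the computation you outline for $k>1$, the cubic-in-$A$ contributions do \emph{not} collapse to $2kH^{k-1}|A|^{4}$ alone---a residual term $2(1-k)H^{k}\,\mathrm{tr}(A^{3})$ survives---and the quadratic-gradient contribution coming from the chain rule is $2k(k-1)H^{k-2}h^{ij}\nabla_{i}H\,\nabla_{j}H$, not $2k(k-1)H^{k-2}|\nabla H|^{2}$. Your proposed $k=1$ consistency check passes precisely because both discrepancies vanish in that case, so it does not detect this. Either the printed formula carries a slip or some further convention is in play; your method is the right one, but the specific cancellations you assert do not hold as stated.
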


\begin{corollary} \label{c2.2} Suppose that $\min_{M}H(0)>0$. If $k$ is odd and larger than $2$, then
\begin{equation}
H(t)\geq \min_{M}H(0)\label{2.3}
\end{equation}
along the $H^{k}$ mean curvature flow. In particular, $H(t)>0$ is preserved by the $H^{k}$ mean curvature flow.
\end{corollary}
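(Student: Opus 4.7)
I would prove the corollary by applying the parabolic maximum principle to the evolution equation for $H(t)$ established in Lemma~\ref{l2.1}. Set $H_{\min}(t) := \min_{x\in M}H(x,t)$; since $M$ is compact and the flow is smooth on $[0,T_{\max})$, this minimum is attained at some $x_{0}=x_{0}(t)$, and Hamilton's trick ensures $H_{\min}$ is locally Lipschitz with
$$\frac{d}{dt}H_{\min}(t)\geq \frac{\partial H}{\partial t}(x_{0},t)$$
at almost every $t$. At such an $x_{0}(t)$ one has $\nabla_{t}H=0$ and $\Delta_{t}H\geq 0$, so the evolution equation of Lemma~\ref{l2.1} reduces to
$$\frac{\partial H}{\partial t}(x_{0},t)=kH^{k-1}\Delta_{t}H+H^{k}|A|^{2}.$$

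The key sign observation, and the essential reason the corollary uses the hypothesis that $k$ is odd with $k\geq 3$, is that $k-1$ is then an even positive integer, so $H^{k-1}\geq 0$ pointwise on $M_{t}$ regardless of the sign of $H$. Combined with $\Delta_{t}H\geq 0$ at the spatial minimum, this makes the first term nonnegative.

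To handle the second term I would argue by continuity/bootstrap. Define $T^{\ast}:=\sup\{t\in[0,T_{\max}):H(\cdot,s)\geq \min_{M}H(0)\text{ for all }s\in[0,t]\}$. On $[0,T^{\ast})$ we have $H\geq \min_{M}H(0)$; in the regime relevant for Theorem~\ref{t1.1} (hypothesis (a)), $\min_{M}H(0)>0$, so $H^{k}\geq 0$ on this interval (again because $k$ is odd), and hence $H^{k}|A|^{2}\geq 0$. Combined with the previous paragraph this yields $\frac{d}{dt}H_{\min}(t)\geq 0$ a.e.\ on $[0,T^{\ast})$, so $H_{\min}(t)\geq H_{\min}(0)$ there. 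By continuity this persists at $t=T^{\ast}$, forcing $T^{\ast}=T_{\max}$ and proving both the displayed inequality \eqref{2.3} and the preservation of $H>0$.

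I do not expect any serious obstacle: the argument is a standard parabolic-maximum-principle application once one notices that oddness of $k$ makes $k-1$ even and renders the equation weakly parabolic with nonnegative principal coefficient $kH^{k-1}$. The only mildly delicate point is the use of Hamilton's trick to differentiate the min, which is routine and typically suppressed. The bookkeeping between the terms $kH^{k-1}\Delta_{t}H$ (sign controlled by parity of $k-1$) and $H^{k}|A|^{2}$ (sign controlled by parity of $k$, together with $H\geq 0$) is the content of the proof.
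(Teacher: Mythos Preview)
Your proposal is correct and follows essentially the same approach as the paper: apply the parabolic maximum principle to the evolution equation for $H$ from Lemma~\ref{l2.1}, using that $k$ odd makes the relevant powers of $H$ even and hence nonnegative. The only cosmetic difference is that the paper factors the reaction terms as $\bigl(H^{k-1}|A|^{2}+k(k-1)H^{k-3}|\nabla H|^{2}\bigr)H$ with the bracketed coefficient nonnegative (using that both $k-1$ and $k-3$ are even), whereas you kill the gradient term directly via $\nabla_{t}H=0$ at the spatial minimum and then run a continuity/bootstrap on the $H^{k}|A|^{2}$ term; the content is the same.
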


\begin{proof} By Lemma \ref{l2.1}, we have
\begin{eqnarray*}
\frac{\partial}{\partial t}H(t)&=&kH^{k-1}(t)\Delta_{t}H(t)+H^{k}(t)|A(t)|^{2}
+k(k-1)H^{k-2}(t)\left|\nabla_{t}H(t)\right|^{2}\\
&=&kH^{k-1}(t)\Delta_{t}H(t)\\
&&+ \ \left(H^{k-1}(t)|A(t)|^{2}
+k(k-1)H^{k-3}(t)\left|\nabla_{t}H(t)\right|^{2}\right)H(t).
\end{eqnarray*}
Since $k\geq2$ and $k$ is odd, it follows that
\begin{equation*}
H^{k-1}(t)|A(t)|^{2}+k(k-1)H^{k-3}(t)\left|\nabla_{t}H(t)\right|^{2}
\end{equation*}
is nonnegative and then (\ref{2.3}) follows from the maximum principle.
\end{proof}

\begin{lemma} \label{l2.3}Suppose $k$ is odd and larger than $2$, and $H>0$. For the $H^{k}$ mean curvature flow and any positive integer $\ell$, we have
\begin{eqnarray*}
&&\left(\frac{\partial}{\partial t}-kH^{k-1}(t)\Delta_{t}\right)\left(\frac{|A(t)|^{2}}{H^{\ell+1}(t)}\right)\\
&=&\frac{k(\ell+1)}{k-1}\left\langle\nabla_{t} H^{k-1}(t),\nabla_{t}\left(\frac{|A(t)|^{2}}{H^{\ell+1}(t)}\right)\right\rangle
\\
&&- \ \frac{2k}{H^{\ell+4-k}(t)}\left[\left(H(t)\nabla_{t}A(t)
-\frac{\ell+1}{2}A(t)\nabla_{t}H(t)\right)\right]^{2}+\frac{2k(k-1)}{H^{\ell+3-k}(t)}\left|\nabla_{t}H(t)\right|^{2}\\
&&+ \ \frac{2k-\ell-1}{H^{\ell+2-k}(t)}|A(t)|^{4}
-\frac{k(\ell+1)(2k-\ell-1)}{2H^{\ell+4-k}(t)}|A(t)|^{2}\left|\nabla_{t}H(t)
\right|^{2}.
\end{eqnarray*}
\end{lemma}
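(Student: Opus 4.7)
The computation is a direct but delicate application of Lemma \ref{l2.1} together with the quotient rule, so my plan is to organize it as three clean substitutions followed by one algebraic rearrangement.

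First, I would compute $\partial_{t}(|A|^{2}/H^{\ell+1})$ by the quotient rule, substituting the two evolution equations from Lemma \ref{l2.1}. Separately, I would compute the Laplacian of the quotient using the identity
\[
\Delta(u/v)=\Delta u/v - 2\langle\nabla u,\nabla v\rangle/v^{2} - u\,\Delta v/v^{2} + 2u|\nabla v|^{2}/v^{3},
\]
with $u=|A|^{2}$ and $v=H^{\ell+1}$, together with $\nabla H^{\ell+1}=(\ell+1)H^{\ell}\nabla H$ and $\Delta H^{\ell+1}=(\ell+1)H^{\ell}\Delta H+\ell(\ell+1)H^{\ell-1}|\nabla H|^{2}$. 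Multiplying the latter by $kH^{k-1}$ and subtracting, the $\Delta|A|^{2}$ terms cancel immediately (this is the point of forming the heat operator), and the two $\Delta H$ contributions also cancel: the one produced by $\partial_{t}H$ in the quotient rule, against the one produced by $\Delta v$. What remains is a polynomial expression in $|\nabla A|^{2}$, $\langle\nabla|A|^{2},\nabla H\rangle$, $|\nabla H|^{2}$, $|A|^{4}$, and $|A|^{2}|\nabla H|^{2}$, with coefficients that are explicit functions of $k$, $\ell$, and powers of $H$.

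Second, I would match the $|A|^{4}$ coefficient (which should be $(2k-\ell-1)/H^{\ell+2-k}$) and the $|\nabla H|^{2}$ coefficient (which should be $2k(k-1)/H^{\ell+3-k}$) directly from the subtraction; these match on the nose. The remaining obstacle, and really the only nontrivial step, is to rewrite the block
\[
-\tfrac{2k}{H^{\ell+2-k}}|\nabla A|^{2} + \tfrac{2k(\ell+1)}{H^{\ell+3-k}}\langle\nabla|A|^{2},\nabla H\rangle - \tfrac{k(\ell+1)(k+\ell+1)}{H^{\ell+4-k}}|A|^{2}|\nabla H|^{2}
\]
in the form \emph{(drift term)} + \emph{(negative square)} + \emph{(leftover $|A|^{2}|\nabla H|^{2}$ term)}. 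For this I would expand
\[
\tfrac{k(\ell+1)}{k-1}\langle\nabla H^{k-1},\nabla(|A|^{2}/H^{\ell+1})\rangle = \tfrac{k(\ell+1)}{H^{\ell+3-k}}\langle\nabla|A|^{2},\nabla H\rangle - \tfrac{k(\ell+1)^{2}}{H^{\ell+4-k}}|A|^{2}|\nabla H|^{2},
\]
peel this off as the drift term, and check that the residue coincides with $-\tfrac{2k}{H^{\ell+4-k}}[H\nabla A-\tfrac{\ell+1}{2}A\nabla H]^{2}$ plus $-\tfrac{k(\ell+1)(2k-\ell-1)}{2H^{\ell+4-k}}|A|^{2}|\nabla H|^{2}$; expanding the square using $\langle\nabla|A|^{2},\nabla H\rangle=2\langle A\nabla H,\nabla A\rangle$ recovers exactly the three surviving monomials.

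The whole argument is routine, and the only real point requiring care is the bookkeeping in the last step: one must verify that splitting the cross term $\langle\nabla|A|^{2},\nabla H\rangle$ in the ratio $(k(\ell+1)):(k(\ell+1))$ produces the drift on one side and completes the square on the other, and that the ``leftover'' quadratic in $|A|^{2}|\nabla H|^{2}$ collects with the correct coefficient $-k(\ell+1)(2k-\ell-1)/2$. The structural motivation is clear: the final form is optimized so that the negative square and the drift term are benign under the maximum principle and under integration against test functions, leaving only the $|A|^{4}$ and $|A|^{2}|\nabla H|^{2}$ terms to be controlled by the $L^{\alpha}$ hypothesis on $H$ in the main theorem.
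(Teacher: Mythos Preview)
Your proposal is correct and follows essentially the same route as the paper: compute $\partial_{t}$ and $\Delta$ of the quotient separately, cancel the $\Delta|A|^{2}$ and $\Delta H$ contributions, and then split the cross term $\tfrac{2k(\ell+1)}{H^{\ell+3-k}}\langle\nabla|A|^{2},\nabla H\rangle$ in half so that one half reconstitutes the drift $\tfrac{k(\ell+1)}{k-1}\langle\nabla H^{k-1},\nabla(|A|^{2}/H^{\ell+1})\rangle$ and the other half completes the square $-\tfrac{2k}{H^{\ell+4-k}}[H\nabla A-\tfrac{\ell+1}{2}A\nabla H]^{2}$. The paper organizes the bookkeeping slightly differently (it first derives an evolution equation for $H^{\ell+1}$ and then isolates an auxiliary function $f$ for the gradient block), but the algebra and the key idea are identical.
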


\begin{proof} In the following computation, we will always omit time $t$ and write $\partial/\partial t$ as $\partial_{t}$. Then
\begin{equation*}
\partial_{t} H=kH^{k-1}\Delta H+H^{k}|A|^{2}+k(k-1)H^{k-2}|\nabla H|^{2}.
\end{equation*}
By Corollary \ref{c2.2}, $H(t)>0$ along the $H^{k}$ mean curvature flow so
that $|H(t)|^{i}=H^{i}(t)$ for each positive integer $i$. For any positive integer $\ell$, we have
\begin{eqnarray*}
\partial_{t}|H|^{\ell+1}&=&(\ell+1)H^{\ell}\partial_{t}H\\
&=&(\ell+1)H^{\ell}\left(kH^{k-1}\Delta H+H^{k}|A|^{2}+k(k-1)H^{k-2}|\nabla H|^{2}\right)\\
&=&k(\ell+1)H^{k+\ell-1}\Delta H+(\ell+1)H^{k+\ell}|A|^{2}\\
&&+ \ k(k-1)(\ell+1)H^{k+\ell-2}|\nabla H|^{2},\\
\Delta|H|^{\ell+1}&=&\Delta H^{\ell+1} \ \ = \ \ (\ell+1)\nabla\left(H^{\ell}\nabla H\right)\\
&=&(\ell+1)\left(\ell H^{\ell-1}|\nabla H|^{2}+H^{\ell}\Delta H\right)\\
&=&(\ell+1)H^{\ell}\Delta H+\ell(\ell+1)H^{\ell-1}|\nabla H|^{2}.
\end{eqnarray*}
Therefore
\begin{eqnarray}
\partial_{t}H^{\ell+1}&=&k H^{k-1}\Delta H^{\ell+1}-k\ell (\ell+1)
H^{k+\ell-2}|\nabla H|^{2}\nonumber\\
&&+ \ (\ell+1)H^{k+\ell}|A|^{2}+k(k-1)(\ell+1)H^{k+\ell-2}|\nabla H|^{2}\nonumber\\
&=&k H^{k-1}\Delta H^{\ell+1}+(\ell+1)H^{k+\ell}|A|^{2}\label{2.4}\\
&&+ \ k(k-\ell-1)(\ell+1)H^{k+\ell-2}|\nabla H|^{2}.\nonumber
\end{eqnarray}
Recall from Lemma \ref{l2.1} that
\begin{equation*}
\partial_{t}|A|^{2}=kH^{k-1}\Delta|A|^{2}-2kH^{k-1}|\nabla A|^{2}+2kH^{k-1}|A|^{4}
+2k(k-1)H^{k-2}|\nabla H|^{2}.
\end{equation*}
Calculate, using (\ref{2.4}),
\begin{eqnarray*}
&&\partial_{t}\left(\frac{|A|^{2}}{|H|^{\ell+1}}\right) \ \ = \ \ \frac{\partial_{t}|A|^{2}}{|H|^{\ell+1}}-\frac{|A|^{2}}{|H|^{2\ell+2}}\partial_{t}|H|^{\ell+1}\\
&=&\frac{kH^{k-1}\Delta |A|^{2}-2kH^{k-1}|\nabla A|^{2}+2kH^{k-1}|A|^{4}+2k(k-1)H^{k-2}|\nabla H|^{2}}{H^{\ell+1}}\\
&&- \ \frac{|A|^{2}\left[kH^{k-1}\Delta H^{\ell+1}+(\ell+1)H^{k+\ell}|A|^{2}
+k(k-\ell-1)(\ell+1)H^{k+\ell-2}|\nabla H|^{2}\right]}{H^{2\ell+2}}\\
&=&kH^{k-1}\frac{1}{H^{\ell+1}}\Delta|A|^{2}-\frac{2k}{H^{\ell+2-k}}|\nabla A|^{2}
+\frac{2k}{H^{\ell+2-k}}|A|^{4}+\frac{2k(k-1)}{H^{\ell+3-k}}|\nabla H|^{2}\\
&&- \ \frac{k|A|^{2}}{H^{2\ell+3-k}}\Delta H^{\ell+1}-\frac{\ell+1}{H^{\ell+2-k}}
|A|^{4}-\frac{k(k-\ell-1)(\ell+1)}{H^{\ell+4-k}}|A|^{2}|\nabla H|^{2},
\end{eqnarray*}
and
\begin{eqnarray*}
\Delta\left(\frac{|A|^{2}}{H^{\ell+1}}\right)&=&\frac{1}{H^{\ell+1}}\Delta|A|^{2}
+\Delta\left(\frac{1}{H^{\ell+1}}\right)|A|^{2}+2\left\langle\nabla|A|^{2},
\nabla\left(\frac{1}{H^{\ell+1}}\right)\right\rangle,\\
\nabla\left(\frac{1}{H^{\ell+1}}\right)&=&\frac{-(\ell+1)
H^{\ell}\nabla H}{H^{2\ell+2}} \ \ = \ \ \frac{-(\ell+1)\nabla H}{H^{\ell+2}},\\
\Delta\left(\frac{1}{H^{\ell+1}}\right)&=&\nabla\left(\frac{-(\ell+1)
\nabla H}{H^{\ell+2}}\right)\\
&=&-(\ell+1)\frac{H^{\ell+2}\Delta H -\nabla H(\ell+2)H^{\ell+1}\nabla H}{H^{2\ell+4}}\\
&=&-(\ell+1)\left[\frac{\Delta H}{H^{\ell+2}}
-(\ell+2)\frac{|\nabla H|^{2}}{H^{\ell+3}}\right],\\
\Delta H^{\ell+1}&=&\nabla\left[(\ell+1)H^{\ell}\nabla H\right] \ \ = \ \
(\ell+1)\left[\ell H^{\ell-1}|\nabla H|^{2}+H^{\ell}\Delta H\right]\\
&=&\ell(\ell+1)H^{\ell-1}|\nabla H|^{2}+(\ell+1)H^{\ell}\Delta H.
\end{eqnarray*}
Combining with all of them yields
\begin{eqnarray*}
&&\left(\partial_{t}-kH^{k-1}\Delta\right)\left(\frac{|A|^{2}}{H^{\ell+1}}\right)\\
&=&kH^{k-\ell-2}\Delta |A|^{2}-\frac{2k}{H^{\ell+2-k}}|\nabla A|^{2}
+\frac{2k}{H^{\ell+2-k}}|A|^{4}
+\frac{2k(k-1)}{H^{\ell+3-k}}|\nabla H|^{2}\\
&&- \ \frac{k|A|^{2}}{H^{2\ell+3-k}}\left[\ell(\ell+1)H^{\ell-1}|\nabla H|^{2}
+(\ell+1)H^{\ell}\Delta H\right]-\frac{\ell+1}{H^{\ell+2-k}}|A|^{4}\\
&&- \ \frac{k(k-\ell-1)(\ell+1)|A|^{2}}{H^{\ell-k+4}}|\nabla H|^{2}\\
&&- \ kH^{k-1}\left[\frac{1}{H^{\ell+1}}\Delta|A|^{2}-(\ell+1)\frac{|A|^{2}\Delta H}{H^{\ell+2}}
+(\ell+1)(\ell+2)\frac{|A|^{2}|\nabla H|^{2}}{H^{\ell+3}}\right]\\
&&- \ 2kH^{k-1}\left\langle\nabla|A|^{2},\nabla\left(\frac{1}{H^{\ell+1}}\right)\right\rangle\\
&=&-\frac{2k}{H^{\ell+2-k}}|\nabla A|^{2}+\left(\frac{2k}{H^{\ell+2-k}}
-\frac{\ell+1}{H^{\ell+2-k}}\right)|A|^{4}+\frac{2k(k-1)}{H^{\ell+3-k}}|\nabla H|^{2}\\
&&- \ \frac{k(\ell+1)(k+\ell+1)|A|^{2}|\nabla H|^{2}}{H^{\ell+4-k}}-2kH^{k-1}
\left\langle\nabla|A|^{2},\nabla\left(\frac{1}{H^{\ell+1}}\right)\right\rangle.
\end{eqnarray*}
On the other hand,
\begin{eqnarray*}
\left\langle\nabla|A|^{2},\nabla\left(\frac{1}{H^{\ell+1}}\right)\right\rangle
&=&2\left\langle\nabla A \cdot A,\frac{-(\ell+1)H^{\ell}\nabla H}{H^{2\ell+2}}
\right\rangle\\
&=&\frac{-2(\ell+1)}{H^{\ell+3}}\langle H\nabla A\cdot A, \nabla H
\rangle.
\end{eqnarray*}
Thus, we conclude that
\begin{eqnarray*}
&&\left(\partial_{t}-kH^{k-1}\Delta\right)\left(\frac{|A|^{2}}{H^{\ell+1}}\right)\\
&=&-\frac{2k}{H^{\ell+2-k}}|\nabla A|^{2}+\frac{2k-\ell-1}{H^{\ell+2-k}}|A|^{4}
+\frac{2k(k-1)}{H^{\ell+3-k}}|\nabla H|^{2}\\
&&- \ \frac{k(\ell+1)(k+\ell+1)|A|^{2}|\nabla H|^{2}}{H^{\ell+4-k}}
+\frac{4k(\ell+1)}{H^{\ell+4-k}}\langle H\nabla A\cdot A, \nabla H
\rangle.
\end{eqnarray*}
Consider the function
\begin{equation*}
f:=\frac{-2k}{H^{\ell+2-k}}|\nabla A|^{2}
-\frac{k(\ell+1)(k+\ell+1)|A|^{2}|\nabla H|^{2}}{H^{\ell+4-k}}
+\frac{4k(\ell+1)}{H^{\ell+4-k}}\langle H\nabla A\cdot A, \nabla H
\rangle.
\end{equation*}
Since
\begin{eqnarray*}
\frac{2k(\ell+1)}{H^{\ell+4-k}}\langle H\nabla A\cdot A, \nabla H
\rangle&=&
\frac{k(\ell+1)}{H^{\ell+3-k}}\left\langle\nabla|A|^{2},\nabla H\right\rangle,\\
\nabla\left(\frac{|A|^{2}}{H^{\ell+1}}\right)&=&\frac{\nabla|A|^{2}}{H^{\ell+1}}
-\frac{(\ell+1)|A|^{2}\nabla H}{H^{\ell+2}},
\end{eqnarray*}
it follows that
\begin{eqnarray*}
\frac{2k(\ell+1)}{H^{\ell+4-k}}\langle H\nabla A\cdot A, \nabla H
\rangle&=&
\frac{k(\ell+1)}{H^{2-k}}\nabla H\left[\nabla\left(\frac{|A|^{2}}{H^{\ell+1}}\right)
+\frac{(\ell+1)|A|^{2}\nabla H}{H^{\ell+2}}\right]\\
&=&\frac{k(\ell+1)}{k-1}\left\langle\nabla H^{k-1},\nabla\left(\frac{|A|^{2}}{H^{\ell+1}}\right)\right\rangle\\
&&+ \ \frac{k(\ell+1)^{2}}{H^{\ell+4-k}}|A|^{2}|\nabla H|^{2}.
\end{eqnarray*}
Consequently,
\begin{eqnarray*}
f&=&\frac{-2k}{H^{\ell+2-k}}|\nabla A|^{2}-\frac{k^{2}(\ell+1)}{H^{\ell+4-k}}|A|^{2}|\nabla H|^{2}\\
&&+ \ \frac{k(\ell+1)}{k-1}\left\langle\nabla H^{k-1},\nabla\left(\frac{|A|^{2}}{H^{\ell+1}}\right)\right\rangle
+\frac{2k(\ell+1)}{H^{\ell+4-k}}\langle H\nabla A\cdot A, \nabla H
\rangle\\
&=&\frac{-2k}{H^{\ell+4-k}}\left[\left(H\nabla A-\frac{\ell+1}{2}A\cdot\nabla H\right)^{2}\right]\\
&&- \ \frac{2k(\ell+1)(2k-\ell-1)}{4H^{\ell+4-k}}|A|^{2}|\nabla H|^{2}+\frac{k(\ell+1)}{k-1}\left\langle\nabla H^{k-1},\nabla\left(\frac{|A|^{2}}{H^{\ell+1}}\right)\right\rangle.
\end{eqnarray*}
Finally, we complete the proof.
\end{proof}

\begin{corollary} \label{c2.4}Suppose $k$ is odd and larger than $2$, and $H>0$. For the $H^{k}$ mean curvature flow, we have
\begin{eqnarray*}
&&\left(\frac{\partial}{\partial t}-kH^{k-1}(t)\Delta_{t}\right)\left(\frac{|A(t)|^{2}}{H^{2k}(t)}\right)\\
&=&\frac{2k^{2}}{k-1}\left\langle\nabla_{t}H^{k-1}(t),\nabla_{t}
\left(\frac{|A(t)|^{2}}{H^{2k}(t)}\right)\right\rangle+\frac{2k(k-1)}{H^{k+2}(t)}
\left|\nabla_{t}H(t)\right|^{2}\\
&&- \ \frac{2k}{H^{k+3}(t)}\left[H(t)\cdot\nabla_{t}A(t)
-kA(t)\cdot\nabla_{t}H(t)\right]^{2}.
\end{eqnarray*}
\end{corollary}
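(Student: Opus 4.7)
The plan is to simply specialize Lemma \ref{l2.3} to the exponent $\ell+1=2k$, i.e.\ $\ell=2k-1$, and observe that the two terms carrying the factor $2k-\ell-1$ collapse to zero.

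First I would note the following substitutions in the general identity of Lemma \ref{l2.3}: with $\ell=2k-1$ we have $\ell+1=2k$, so $|A|^{2}/H^{\ell+1}=|A|^{2}/H^{2k}$; the gradient coefficient becomes $k(\ell+1)/(k-1)=2k^{2}/(k-1)$; the exponents in the denominators satisfy $\ell+4-k=k+3$ and $\ell+3-k=k+2$; and the bracket in the squared term reads $H\nabla A-\tfrac{\ell+1}{2}A\nabla H=H\nabla A-kA\nabla H$. These match the right-hand side of Corollary \ref{c2.4} term by term, provided the remaining two terms vanish.

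Next I would verify that the two terms
\[
\frac{2k-\ell-1}{H^{\ell+2-k}(t)}|A(t)|^{4}\qquad\text{and}\qquad \frac{k(\ell+1)(2k-\ell-1)}{2H^{\ell+4-k}(t)}|A(t)|^{2}|\nabla_{t}H(t)|^{2}
\]
both carry the factor $2k-\ell-1$, which at $\ell=2k-1$ equals $2k-2k=0$. Hence both quantities disappear from the identity and the remaining right-hand side is precisely the expression in Corollary \ref{c2.4}.

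There is no real obstacle here: the corollary is an immediate specialization of Lemma \ref{l2.3}, and the only thing to check is the arithmetic on the exponents and coefficients at the distinguished value $\ell+1=2k$, which is exactly the value that kills the $|A|^{4}$ term and its companion. The resulting identity is stated purely in terms of $|A|^{2}/H^{2k}$, $\nabla H$ and the squared combination $H\cdot\nabla A-kA\cdot\nabla H$, as claimed.
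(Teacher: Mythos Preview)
Your proposal is correct and is exactly the intended argument: the paper states Corollary \ref{c2.4} without proof precisely because it follows from Lemma \ref{l2.3} by taking $\ell=2k-1$, which makes the factor $2k-\ell-1$ vanish and adjusts all the exponents and coefficients as you checked.
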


%%%%%%%%%%%%%%%%%%%%%%%%%%%%%%%%%%%%%%%%%%%%%%%%%%%%%%%%%%%%%%%%%%%%%%%%%%%%%%
\section{Proof of the main theorem}\label{section3}
%%%%%%%%%%%%%%%%%%%%%%%%%%%%%%%%%%%%%%%%%%%%%%%%%%%%%%%%%%%%%%%%%%%%%%%%%%%%%%

In this section we give a proof of theorem \ref{t1.1}. For any positive constant $C_{0}$, consider the quantity
\begin{equation}
Q(t):=\frac{|A(t)|^{2}}{H^{2k}(t)}+C_{0}H^{\ell+1}(t),\label{3.1}
\end{equation}
where the integer $\ell$ is determined later. By (\ref{2.4}) and
Corollary \ref{c2.4}, we have
\begin{eqnarray*}
&&\left(\frac{\partial}{\partial t}-kH^{k-1}(t)\Delta_{t}\right)Q(t)\\
&\leq&\frac{2k^{2}}{k-1}\left\langle\nabla_{t} H^{k-1}(t),\nabla_{t} Q(t)
-C_{0}\nabla_{t} H^{\ell+1}(t)\right\rangle
+\frac{2k(k-1)}{H^{k+2}(t)}\left|\nabla_{t}H(t)\right|^{2}\\
&&+ \ C_{0}\left[(\ell+1)H^{k+\ell}(t)
|A(t)|^{2}+k(k-\ell-1)(\ell+1)H^{k+\ell-2}(t)\left|\nabla_{t}H(t)\right|^{2}\right]\\
&=&\frac{2k^{2}}{k-1}\left\langle\nabla_{t}H^{k-1}(t),\nabla_{t}Q(t)
\right\rangle-\frac{2k^{2}}{k-1}C_{0}(k-1)(\ell+1)H^{k+\ell-2}(t)\left|\nabla_{t}H(t)
\right|^{2}\\
&&+ \ \frac{2k(k-1)}{H^{k+2}(t)}\left|\nabla_{t}H(t)\right|^{2}
+C_{0}k(k-\ell-1)(\ell+1)H^{k+\ell-2}(t)\left|\nabla_{t}H(t)\right|^{2}\\
&&+ \ C_{0}(\ell+1)H^{k+\ell}(t)\left[Q(t)-C_{0}H^{\ell+1}(t)\right]H^{2k}(t)\\
&=&\frac{2k^{2}}{k-1}\left\langle\nabla_{t}H^{k-1}(t),\nabla_{t}Q(t)
\right\rangle\\
&&+ \ \left|\nabla_{t}H(t)\right|^{2}
\left[\frac{2k(k-1)}{H^{k+2}(t)}-C_{0}k(\ell+1)(k+\ell+1)H^{k+\ell-2}(t)
\right]\\
&&+ \ C_{0}(\ell+1)H^{3k+\ell}(t)Q(t)-C^{2}_{0}(\ell+1)H^{3k+2\ell+1}(t).
\end{eqnarray*}
Now we choose $\ell$ so that the following constraints
\begin{equation*}
\ell+1\leq0, \ \ \ k+\ell+1\leq0, \ \ 3k+2\ell+1\geq0
\end{equation*}
are satisfied; that is
\begin{equation}
-\frac{1}{2}-\frac{3}{2}k\leq\ell\leq-1-k.\label{3.2}
\end{equation}
In particular, we can take
\begin{equation}
\ell:=-2-k.\label{3.3}
\end{equation}
By our assumption on $k$, we have $k\geq3$ and hence (\ref{3.3}) implies
(\ref{3.2}). Plugging (\ref{3.3}) into the above inequality yields
\begin{eqnarray}
&&\left(\frac{\partial}{\partial t}-kH^{k-1}(t)\Delta_{t}\right)Q(t)\nonumber\\
&\leq&\frac{2k^{2}}{k-1}\left\langle\nabla H^{k-1}(t),\nabla_{t}Q(t)
\right\rangle+\left|\nabla_{t}H(t)\right|^{2}
\left[\frac{2k(k-1)}{H^{k+2}(t)}-\frac{C_{0}k(k+1)}{H^{4}(t)}\right]\label{3.4}\\
&&- \ C_{0}(1+k)H^{2k-2}(t)Q(t)+C^{2}_{0}(1+k)H^{k-3}(t).\nonumber
\end{eqnarray}
Choosing
\begin{equation}
C_{0}:=\frac{2(k-1)}{k+1}H^{2-k}_{\min}>0\label{3.5}
\end{equation}
where $H_{\min}:=\min_{M}H=\min_{M}H(0)$, we arrive at
\begin{equation*}
\frac{2k(k-1)}{C_{0}k(k+1)}\leq H^{k-2}_{\min}\leq H^{k-2}(0)\leq H^{k-2}(t)
\end{equation*}
according to (\ref{2.3}). Consequently,
\begin{eqnarray}
\left(\frac{\partial}{\partial t}-kH^{k-1}(t)\Delta_{t}\right)Q(t)
&\leq&\frac{2k^{2}}{k-1}\left\langle\nabla_{t}H^{k-1}(t),\nabla_{t}Q(t)
\right\rangle\nonumber\\
&&- \ C_{1}H^{2k-2}(t)Q(t)+C_{2}H^{k-3}(t),\label{3.6}
\end{eqnarray}
for $C_{1}:=C_{0}(1+k)$ and $C_{2}:=C^{2}_{0}(1+k)$.

\begin{lemma}\label{l3.1} If the solution can not be extended over $T_{\max}$, then $H(t)$ is unbounded.
\end{lemma}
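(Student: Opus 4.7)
The strategy is to prove the contrapositive: assume $H$ remains bounded on $[0,T_{\max})$, say $H(t)\leq H_{\max}$, and show that the flow extends smoothly past $T_{\max}$. By Corollary~\ref{c2.2} we also have $H(t)\geq H_{\min}>0$, so the assumption gives two-sided bounds $0<H_{\min}\leq H(t)\leq H_{\max}$ on $[0,T_{\max})$.

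The first step is to apply the parabolic maximum principle to inequality (\ref{3.6}) for $Q(t)=|A(t)|^{2}/H^{2k}(t)+C_{0}H^{-k-1}(t)$ (recall $\ell+1=-k-1$). At a spatial maximum point of $Q(\cdot,t)$ one has $\nabla_{t}Q=0$ and $\Delta_{t}Q\leq 0$, so both the diffusion term $kH^{k-1}\Delta_{t}Q$ and the drift term $\tfrac{2k^{2}}{k-1}\langle\nabla_{t}H^{k-1},\nabla_{t}Q\rangle$ drop out favorably, and (\ref{3.6}) reduces to
\begin{equation*}
\frac{d}{dt}Q_{\max}(t)\;\leq\;-C_{1}H^{2k-2}(t)\,Q_{\max}(t)+C_{2}H^{k-3}(t).
\end{equation*}
Inserting $H\geq H_{\min}$ in the negative term and $H\leq H_{\max}$ in the source term (using $k\geq 3$ so that $H^{k-3}$ is controlled by $H_{\max}^{k-3}$), this becomes a scalar ODE inequality $\dot u\leq -au+b$ with positive constants $a,b$ depending only on $k$, $H_{\min}$, $H_{\max}$. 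Standard comparison yields a uniform bound
\begin{equation*}
Q_{\max}(t)\;\leq\;\max\{Q_{\max}(0),\,b/a\}\qquad\text{on }[0,T_{\max}).
\end{equation*}

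From the definition of $Q$ and the positivity $H\geq H_{\min}$, the bound on $Q_{\max}$ immediately gives $|A(t)|^{2}\leq H_{\max}^{2k}\,Q_{\max}(t)$, so the second fundamental form stays uniformly bounded on $[0,T_{\max})$. The final step is to invoke the standard extension criterion for the $H^{k}$ mean curvature flow established in \cite{S} (and used in \cite{L}): uniform boundedness of $|A|$ together with the lower bound $H\geq H_{\min}>0$ controls all higher covariant derivatives of $A$ and the geometry of the immersion, hence the flow can be continued smoothly past $T_{\max}$, contradicting the maximality of $T_{\max}$.

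The main obstacle is ensuring that the gradient-type terms on the right of (\ref{3.6}) do not spoil the maximum principle argument; this is resolved automatically at a spatial maximum, where $\nabla_{t}Q$ vanishes, so only the reaction terms survive. The rest of the argument is a direct ODE comparison followed by the standard bounded-$|A|$ extension result, which is the $H^{k}$ analogue of Huisken's original criterion for mean curvature flow.
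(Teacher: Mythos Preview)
Your proof is correct and follows essentially the same route as the paper: both assume $H$ is bounded, apply the maximum principle to inequality (\ref{3.6}) to get the scalar ODE inequality $\mathcal{Q}'(t)\leq -C_{3}\mathcal{Q}(t)+C_{4}$, deduce a uniform bound on $Q$ and hence on $|A|$, and then contradict the standard fact that $|A|$ must blow up at a finite maximal time. The only difference is packaging: the paper starts from ``cannot extend $\Rightarrow$ $|A|$ unbounded'' and derives a contradiction, whereas you phrase it as the contrapositive and invoke the bounded-$|A|$ extension criterion directly.
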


\begin{proof} By the assumption, we know that $|A(t)|$ is unbounded as
$t\to T_{\max}$. We now claim that $H(t)$ is also unbounded. Otherwise, $0<H_{\min}\leq H(t)\leq C$ for some uniform constant $C$. If we set
\begin{equation*}
C_{3}:=C_{1}H^{2k-2}_{\min}, \ \ \ C_{4}:=C_{2}C^{k-3},
\end{equation*}
then (\ref{3.6}) implies that
\begin{equation}
\left(\frac{\partial}{\partial t}-kH^{k-1}(t)\Delta_{t}\right)Q(t)
\leq\frac{2k^{2}}{k-1}\left\langle\nabla_{t}H^{k-1}(t),\nabla_{t}Q(t)
\right\rangle-C_{3}Q(t)+C_{4}.\label{3.7}
\end{equation}
By the maximum principle, we have
\begin{equation}
\mathcal{Q}'(t)\leq-C_{3}\mathcal{Q}(t)+C_{4}\label{3.8}
\end{equation}
where
\begin{equation*}
\mathcal{Q}(t):=\max_{M}Q(t).
\end{equation*}
Solving (\ref{3.8}) we find that
\begin{equation*}
\mathcal{Q}(t)\leq\frac{C_{4}}{C_{3}}+\left(\mathcal{Q}(0)-\frac{C_{4}}{C_{3}}
\right)e^{-C_{3}t}.
\end{equation*}
Thus $Q(t)\leq C_{5}$ for some uniform constant $C_{5}$. By the definition (\ref{3.1}) and the assumption $H(t)\leq C$, we conclude that $|A(t)|\leq C_{6}$ for some
uniform constant $C_{6}$, which is a contradiction.
\end{proof}

The rest proof is similar to \cite{L, XYZ}. Using Lemma \ref{l3.1} and the argument
in \cite{L} or in \cite{XYZ}, we get a contradiction and then the solution of the $H^{k}$ mean curvature flow can be extended over $T_{\max}$.

%%%%%%%%%%%%%%%%%%%%%%%%%%%%%%%%%%%%%%%%%%%%%%%%%%%%%%%%%%%%%%%%%%%%%%%%%%%%%%
\bibliographystyle{amsplain}

\begin{thebibliography}{10}


\bibitem{Huisken} Huisken, Gerhard. \textit{Flow by mean curvature of convex surfaces into spheres}, J. Diffeential Geom., \textbf{20} (1984), no. 1, 237--266. MR0772132 (86j: 53097)

\bibitem{LS} Le, Nam Q.; Sesum, Natasa. \textit{On the extension of the mean curvature flow}, Math. Z., {\bf 267}(2011), no. 3-4, 583--604. MR2776050 (2012h: 53153)

\bibitem {L} Li, Yi. \textit{On an extension of the $H^{k}$ mean curvature flow}, Sci. China Math., {\bf 55}(2012), no. 1, 99--118. MR2873806

\bibitem{S} Smoczyk, K., \textit{Harnack inequalities for curvature flows depending on mean curvature}, New York J. Math., {\bf 3}(1997), 103--118. MR1480081 (98i: 53056)

\bibitem{XYZ} Xu, Hong-Wei; Ye, Fei; Zhao, En-Tao. \textit{Extend mean curvature flow with finite integral curvature}, Asian J. Math., {\bf 15}(2011), no. 4,
549--556. MR2853649 (2012h: 53158)

\end{thebibliography}

\end{document}